\NewDocumentCommand{\ceil}{s O{} m}{%
  \IfBooleanTF{#1} 
    {\left\lceil#3\right\rceil} 
    {#2\lceil#3#2\rceil} 
}
\def\@biblabel#1{}
\renewcommand\footnotemark{}
\date{}
\newtheorem{thm}{Theorem}[section]
\newtheorem{cor}[thm]{Corollary}
\newtheorem{lem}[thm]{Lemma}
\newtheorem{prop}[thm]{Proposition}
\theoremstyle{definition}
\newtheorem{defn}[thm]{Definition}
\newtheorem{rem}[thm]{Remark}
\newcommand{\al}{\alpha}
\newcommand{\ze}{\zeta}
\newcommand{\D}{\Delta}
\newcommand{\la}{\lambda}
\numberwithin{equation}{section}
\numberwithin{table}{section}
\numberwithin{figure}{section}
\def\mathbi#1{{#1}}
\title{\textbf {\Large A Homology Theory of Graphs: First Homology Group of Hamiltonian Graphs}}
\author{\footnotesize PONGDATE MONTAGANTIRUD AND NATTHAWUT PHANACHET}
\def\mathbi#1{{#1}}
\begin{document}

\maketitle

\begin{abstract}
An integral homology theory on the category of undirected reflexive graphs was constructed in \cite{BenayatTalbi2014}. A geometrical method to understand behaviors of $1$- and $2$-simplices under differential maps of the theory was developed in \cite{Phanachet2015} and led us to an independent proof that the first homology group of any cycle graphs is $\mathbb{Z}$, as it was proved before by a version of Hurewicz theorem harshly defined and shown in \cite{BenayatKadri1997} and \cite{BenayatTalbi2014}. In this work, we use the old method in \cite{Phanachet2015} to study behaviors of the first homology group of Hamiltonian graphs. We discovered that $H_1(G)$ is torsion-free, for any Hamiltonian graphs $G$. 
\end{abstract}

\section{\large Introduction}

In \cite{BenayatTalbi2014}, D. Benayat and M. E. Talbi constructed an integral homology theory on the category of undirected reflexive graphs in the same spirit as singular homology theory with the hope that the theory could be useful in dealing with problems in algebraic topology. The theory has main properties such as long exact sequences of pairs of graphs, the homotopy invariance and the excision theorem for suspension of graphs. To claim nontriviality of the theory, D. Benayat and M. E. Talbi defined a kind of graphs called graphical sphere $S^n$ and then showed that it has the same homology as the topological $n$-sphere. The $n$-sphere $S^n$ is built inductively from the suspension of $S^{n-1}$ as in the topological case, where $S^1$ is defined to be the cycle graph of $4$ vertices (possible nontrivial cycle graph with the smallest length). Besides from excision theorem in suspension version, the first homology group of $S^1$ is also needed to complete the homology calculation of any $S^n$. It is proved that $H_1(G)\cong\mathbb{Z}$ via a version of Hurewicz theorem shown in \cite{BenayatTalbi2014}, whereas a homotopy theory on the category of undirected reflexive graphs have been defined in \cite{BenayatKadri1997}.\\
\indent By our study on this homology theory, we set up our ideas in \cite{Phanachet2015} to reconsider $1$- and $2$-simplices in an easier way to understand them more geometrically under differential maps. This made us clear in their nature and consequently be able to guarantee that the first homology group of cycle graphs, especially $S^1$, is really $\mathbb{Z}$, satisfying D. Benayat and M. E. Talbi's result. This emphasized us that our approach to play with first homology is well enough.   We also showed in \cite{Phanachet2015} that any two cycle graphs with distinct number of vertices is non-homotopic to each other in the sense of homotopy defined in \cite{BenayatKadri1997}, \cite{BenayatTalbi2014}.\\
\indent In this paper, we use the method developed in \cite{Phanachet2015} to study the first homology group of a more general case of cycle graphs, Hamiltonian graphs, and what we mainly discovered in this study is that the first homology of any Hamiltonian graphs is always torsion-free. The idea behind the proof of this fact is very straightforward: we show that it is always possible to find a basis. \\
\indent Since the way we exhibit here only relies on \cite{BenayatTalbi2014}, \cite{Phanachet2015}, we intend to make this paper self-contained so that we collect all necessary things for readers in the next section. Note that we can imagine graphs in the category of undirected reflexive graphs, which we are going to work with as usual simple graphs. This makes sense because of \ref{class}. 

\section{\large Preliminaries}

\subsection{\normalsize Basic Definitions \& Notations}


\begin{defn} 
A \textbf{graph} $G$ is a pair $(V_G,\mathcal{N}_G)$ which $V_G$ is the set of vertices and 
$\mathcal{N}_G=\{\mathcal{N}_G(x):x\in V_G\}$, where $\mathcal{N}_G(x)$ is the set of neighbors of $x$.
Additionally, every vertex is a neighbor of itself: $x\in \mathcal{N}_G(x)$, $\forall x\in V$.
\end{defn}

\begin{defn}
A \textbf{morphism} $f:G\rightarrow G'$ is a function $f:V_G\rightarrow V_{G'}$ such that $f(\mathcal{N}_G(a))\subset \mathcal{N}_{G'}(f(a))$ for every $a\in V_G$.
\end{defn}

In this work, we adopt the categorical product, not the cartesian product, which is defined in the following definition.

\begin{defn}
If $G$ and $G'$ are graphs, their \textbf{categorical product}, written $G\times G'$, is defined by 
$(V_{G\times G'},\mathcal{N}_{G\times G'})$ where $\mathcal{N}_{G\times{G'}}(x,y)=\mathcal{N}_G(x)\times \mathcal{N}_{G'}(y)$ for $(x,y)\in V_G\times V_{G'}$.\\ Note that this product can be generalized to the finite product of graphs.
\end{defn}  

\begin{defn}
If $m\in \mathbb{N}\cup \{0\}$, the graph $\overline{\textbf{m}}$ is $(\{0,1,2,...,m\},\mathcal{N}_{\overline{m}})$ where 
$$\mathcal{N}_{\overline{m}}(a)=
\begin{cases}
	\{a-1,a,a+1\} &\text{if} \quad a\in \{1,2,...,m-1\};\\
	\{0,1\} &\text{if} \quad a=0;\\
	\{m-1,m\} &\text{if} \quad a=m.
\end{cases}
$$
We will use n-categorical product of $\overline{2}$, $\overline{2}^n$, which is easily written as $I_n$.
\end{defn}

\subsubsection{Singular n-simplices}

If $n\in \mathbb{N}$, a \textbf{singular n-simplex}  of the graph $G$ is a morphism $\sigma:I_n={\overline{2}}^n\rightarrow G$.\\  A $0$-simplex is just a morphism $\sigma:\{0\}\rightarrow G$ and can be identified with a vertex of 
$G$. Let us put $\textbf{S}_n(G)$ for the set of all $n$-simplices of $G$. If $R$ is a
unitary commutative ring, then the $R$-module $\mathcal{S}_n(G,R)$ of \textbf{\textit{n}-chains} of $G$ is the free $R$-module with the basis
$\textbf{S}_n(G)$: $$\mathcal{S}_n(G,R)=\{\sum_{\sigma\in \textbf{S}_n(G)} a_\sigma \cdot \sigma:a_\sigma\in R\text{ and are almost all zero}\}.$$
\indent An element of $\mathcal{S}_0(G,R)$ is a finite formal sum $\sum_{g\in V_G}a_g \cdot g$. From now on, we assume that $R=\mathbb{Z}$ and we omit mentioning it.\\ \indent If $n\geq 1$ and the index $j$ runs from $1$ to $n+1$, the index $k$ from $0$ to $1$, the \textbf{faces} of $I_{n+1}$ are the inclusions $f_{n+1}^{j,k} :I_n\rightarrow I_{n+1}$ defined by $$f_{n+1}^{j,k} (p_1,...,p_n)=(p_1,...,p_{j-1},2k,p_{j+1},...,p_n).$$
\indent When $n=0$, the faces of $I_{0+1}=\overline{2}$ are just the inclusions of the subgraphs $\{0\}$ and $\{2\}$ in $\overline{2}$. \\
\indent From above, we have the faces of the $(n+1)$-simplex $\sigma:I_{n+1}\rightarrow G$ are the composites $\sigma^{j,k}=\sigma\circ
f^{j,k}_{n+1}:I_n\rightarrow G$; so they are the restrictions of $\sigma$ to the faces of $I_n$ which really are $n$-simplices.\\
\indent We define the \textbf{differential} $\partial_{n+1}:\mathcal{S}_{n+1}(G)\rightarrow \mathcal{S}_n (G)$ on the generators $\sigma$ of $\mathcal{S}_{n+1}(G)$ as the sum $\partial_{n+1}(\sigma)=\sum_{j=1,...,n+1,k=0,1}(-1)^{j+k} \sigma^{j,k}$ and extend it by linearity. Then we have $\partial_n\circ \partial_{n+1}=0$ for all $n\geq 1$. So, $(\mathcal{S}_n(G),\partial_n)$ is a \textbf{chain complex}.

\subsubsection{Degenerate Singular Simplices}
The \textbf{degenerate faces} of $I_n$ are the $n$ projections $D_l:I_n\rightarrow I_{n-1}$ defined by $D_l(a_1,...,a_n)=(a_1,...,a_{l-1},a_{l+1},...,a_n)$ for $l=1,...,n$. For $n=0$, the degenerate face of $I_1=\overline{2}$ is the projection $D_1:I_1\rightarrow I_0=\{0\}$.\\
\indent The $n$-simplex $s:I_{n+1}\rightarrow G$ is \textbf{degenerate} if it factors through a degenerate face $D_l$ of
$I_{n+1}$; there is an $n$-simplex $\sigma$ such that $s=\sigma\circ D_l$. Let us put $\mathcal{D}_n(G)$ for the
$\mathbb{Z}$-submodule of $\mathcal{S}_n(G)$ generated by the degenerate $n$-simplices.\\
\indent It is clear that the quotient module $\mathcal{S}_n(G)/\mathcal{D}_n(G)$ is free with basis the cosets of nondegenerate $n$-simplices. It can be checked that $\partial_{n+1}(\mathcal{D}_{n+1}(G))\subset\mathcal{D}_n(G)$ for all $n\geq 1$. So, the differential $\partial_n$ can be factored through the quotients by the degenerate simplices, giving a differential $$\tilde{\partial}_n:\tilde{\mathcal{S}}_n(G)=\mathcal{S}_n(G)/\mathcal{D}_n(G)\rightarrow \mathcal{S}_{n-1}(G)/\mathcal{D}_{n-1}(G)=\tilde{\mathcal{S}}_{n-1}(G)$$ and providing us with a chain complex $(\tilde{\mathcal{S}}_n(G),\tilde{\partial}_n)$. As usual, we define the $n$-cycle, $n$-boundary and $n$th-homology group as, respectively:
\begin{align*}
Z_n(G)&=ker(\tilde{\partial}_n:\tilde{\mathcal{S}}_n\rightarrow\tilde{\mathcal{S}}_{n-1})\\
B_n(G)&=Im(\tilde{\partial}_{n+1}:\tilde{\mathcal{S}}_{n+1}\rightarrow\tilde{\mathcal{S}}_n)\\
H_n(G)&=Z_n(G)/B_n(G).
\end{align*} 

\subsubsection{Conventions}

\begin{enumerate}
		\item Every basis element of $S_1$ is denoted by the symbol $(abc)$, which $(abc)$ means a morphism of a graph 
	that sends $0$ to $a$, $1$ to $b$ and $2$ to $c$. We always imagine the picture of the basis element of $S_1$ with the parenthesis symbol.
		\item Every basis element of $S_2$ is denoted by the 3-square matrix $[a_{ij}]_{3\times 3}$ which means that
the $(r,s)$ vertex of $I_2$ is sent to $a_{(3-s)(r+1)}$ entry.     
		\item We will represent all elements of $H_1$ with $[X]$, where $X\in$ Ker$\partial_1$.
		\item As we have already defined, we let $\zeta$ to be the special letter occupied only for this situation: If we want to mention to the $S_1$ basis which we do intend to say about its $\bar{1}$ position(or the center position) in ways that the center is equal to the left or the right position, or is equal to both side in the case of equality. In this circumstance, we use $\zeta$ to represent the center. For instance, $(a\zeta b)$ can be interpreted to mean 
$(a\zeta b)=(aab)$ or $(a\zeta b)=(abb)$. 
		\item Every element of Ker$\partial_1$ is called \textbf{cycle} and say every cycle in Im$\partial_2$ is \textbf{trivial}. 
\end{enumerate}

\indent From our study on this homology theory, the following facts are keys to understand behaviors of all cycles.  

\subsubsection{Some Facts}
\begin{enumerate}
		\item The elements $(abc)-(abb)-(bcc)$, $(abc)-(aab)-(bbc)$ and $(abc)-(abb)-(bbc)$ are in 
		Im$\partial_2$. Because of the following consideration, we know how they can be obtained. There are
		$\left(\begin{smallmatrix} b&c&c \\ b&b&b \\ a&a&a \end{smallmatrix}\right)$, 
		$\left(\begin{smallmatrix} b&b&c \\ a&b&b \\ a&a&a \end{smallmatrix}\right)$,
		$\left(\begin{smallmatrix} b&b&c \\ b&b&b \\ a&a&a \end{smallmatrix}\right)$ $\in S_2$ 
		such that their values under $\partial_2$ being those elements respectively. 
		Moreover, $(abc)-(aab)-(bcc)$ is in Im$\partial_2$ because there is 
		$\left(\begin{smallmatrix} b&b&c \\ b&b&b \\ a&a&a \end{smallmatrix}\right)+$ 
		$\left(\begin{smallmatrix} b&b&b \\ a&b&b \\ a&a&a \end{smallmatrix}\right)+$
		$\left(\begin{smallmatrix} c&c&c \\ c&b&b \\ b&b&b \end{smallmatrix}\right)$.
		\item Similarly, due to $\left(\begin{smallmatrix} a&a&a \\ a&b&b \\ a&b&c \end{smallmatrix}\right)$,
		$\left(\begin{smallmatrix} a&a&a \\ a&a&a \\ a&b&a \end{smallmatrix}\right)$ and
		$\left(\begin{smallmatrix} a&b&b \\ a&a&b \\ a&a&b \end{smallmatrix}\right)$,
		the cycle $(abc)+(cba)$, $(aba)$ and $(aab)-(abb)$ are elements of Im$\partial_2$ respectively. 
		\item Given $\sum_{i=1}^n(abc)_i$ be an element in Ker$\partial_1$. By the fact 2, we attain \newline
		$[\sum_{i=1}^n(abc)_i]=-[\sum_{i=1}^n(cba)_i]$
		\item As mentioned above, the 3-square matrix is used to represent the basis element of $S_2$. If we imagine that 	matrix looks like a square, it can be roughly said that the class of the image $\partial_2$ of this square is invariant under rotating square. To have more understanding about this fact, let 
	$\left(\begin{smallmatrix} a&b&c \\ d&e&f \\ g&h&i \end{smallmatrix}\right)\in S_2$. We know that all of these bases 
	$$\left(\begin{smallmatrix} a&b&c \\ d&e&f \\ g&h&i \end{smallmatrix}\right),
	\left(\begin{smallmatrix} c&f&i \\ b&e&h \\ a&d&g \end{smallmatrix}\right),
	\left(\begin{smallmatrix} i&h&g \\ f&e&d \\ c&b&a \end{smallmatrix}\right),
	\left(\begin{smallmatrix} g&d&a \\ h&e&b \\ i&f&c \end{smallmatrix}\right)$$ are not all necessarily identical. 
	It can be checked that these are all possible forms of 
	$\left(\begin{smallmatrix} a&b&c \\ d&e&f \\ g&h&i \end{smallmatrix}\right)$ 
	under rotating clockwise or counterclockwise. \newline
	\end{enumerate}
	
\begin{defn}
A cycle in Ker$\partial_1$ is said to be a \textbf{proper cycle} if it is a \underline{summation} of basis elements which each of them has 1 as its coefficient.
\end{defn}

The underlined word means that this cycle is really summation, that is the minus sign does not exist. For an example, $(abc)+(cde)+(efg)+(gda)+(acd)+(dba)$, $(aba)$. Note that, in the rest of the paper, we always order the writing sequence of the bases of any proper cycle which is introduced later in the remarks unless it has an exception.

\begin{defn}
	A \textbf{shortest cycle} is a proper cycle consisting of only one basis element. 
\end{defn}

From previous example, $(aba)$. 

\begin{defn}
A proper cycle is said to be a \textbf{perfect cycle} if its each basis element consists of only two letters and the left position is not permitted to equal to the right. 
\end{defn}

For instance, $(aab)+(bcc)+(ccd)+(ddb)+(bba)$. 

\begin{defn}
\textbf{Simple cycle} is a perfect cycle with only two terms of $S_1$ basis elements. 
\end{defn}

For example, $(aab)+(bba)$. 

\begin{rem}\label{remark2}
1. Because each cycle is the combination of bases of $S_1$ which its image under $\partial_1$ is $0$, as the element of $H_1$, it can be rearranged the combination order of its bases which make it look like a real cycle. \newline 
\indent To give more details, If $A$ is a cycle. Consider $[A]$, by the above facts, we can transform $A$ into the other form called $A'$ which there is no minus sign and $[A]=[A']$. If $A'$ is the combination of $n$ terms, we let 
$\{f_n\}$ to be the corresponding morphisms and a new order for $A'$ we are looking for will follow this: $f_i(2)=f_{i+1}(0)$ for each $i\in \{1,...,n-1\}$ and $f_1(0)=f_n(2)$. \newline
\indent By this form, we can picture this cycle with a straight line combination beginning from left hand with each coefficient is only $1$, the first letter on the left equals to the last letter on the right and the right position of each basis except the last basis is same as the left position of the right one. \newline 
\indent For example, it is clear that $2(abc)-(bbc)+(baa)+(cba)\in$ Ker$\partial_1$. If we follow the previous form, we have $[2(abc)-(bbc)+(baa)+(cba)]=[(abc)+(cbb)+(baa)+(abc)+(cba)]$. \newline
\indent Therefore, we can conclude that every cycle always has a proper cycle which make them equal in the first homology group.\newline 
\indent 2. With the facts above combining with the definition of the proper cycle, we will obtain a result: every proper cycle always has a perfect cycle which is the same as the elements of the first homology group.\newline
\indent For example, there is a cycle $(aab)+(bbc)+(cbb)+(baa)+(abb)+(bcc)+(ccb)+(bba)$ which makes 
$[(abc)+(cbb)+(baa)+(abc)+(cba)]=[(aab)+(bbc)+(cbb)+(baa)+(abb)+(bcc)+(ccb)+(bba)]$.\newline
\indent Thus, it can be confirmed that every cycle always has a perfect cycle which make them identical in the first homology group.\newline
\indent Note that, from this remark, the shortest cycle can be regarded as a simple cycle in first homology group, that is, we have $(aab)+(baa)$ such that $[(aba)]=[(aab)+(baa)]$.
\end{rem}

\subsection{\normalsize Relation to Simple Graph Theory}

The following propositions are straightforward to see, however, they help us to work easier in section 3.  

\begin{prop}\label{class}
There is an one-to-one correspondence between the class of undirected reflexive graphs and the class of simple graphs. 
\end{prop}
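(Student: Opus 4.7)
The plan is to exhibit an explicit bijection between the two classes by formalising the intuition that a reflexive graph is just a simple graph in which we have decorated every vertex with a loop. Given an undirected reflexive graph $G=(V_G,\mathcal{N}_G)$, define $\Psi(G)$ to be the simple graph on vertex set $V_G$ whose edge set is $\{\{x,y\}:x\neq y,\ y\in\mathcal{N}_G(x)\}$; the symmetry of $\mathcal{N}_G$ (implicit in the word ``undirected'') makes this an unordered‐pair description, and the restriction $x\neq y$ discards exactly the reflexive loops so that the output is genuinely simple. Conversely, given a simple graph $H=(V,E)$, define $\Phi(H)=(V,\mathcal{N})$ by $\mathcal{N}(x)=\{x\}\cup\{y\in V:\{x,y\}\in E\}$. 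This is reflexive by construction, and undirected because $E$ is a set of unordered pairs.

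Next, I would verify that $\Phi$ and $\Psi$ are well-defined, and then that they are mutually inverse. The composition $\Psi\circ\Phi$ acts as the identity on a simple graph $H$ because adjoining loops and then removing them gives back the original edge set. For the other direction, $\Phi\circ\Psi$ sends a reflexive graph $G$ to the graph with neighborhoods $\{x\}\cup\{y\neq x:y\in\mathcal{N}_G(x)\}$, which equals $\mathcal{N}_G(x)$ since $x\in\mathcal{N}_G(x)$ was already part of the original data by the reflexive axiom. These two checks together establish the bijection at the level of objects.

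There is essentially no obstacle here: the statement is a bookkeeping lemma which reduces to an unfolding of the definitions. The only subtlety worth flagging is the symmetry of the neighbor relation; since the paper talks about ``undirected'' graphs, I would make explicit at the outset the assumption that $y\in\mathcal{N}_G(x)$ iff $x\in\mathcal{N}_G(y)$, so that $\Psi$ really produces a well-defined set of unordered edges. Once this is in place, the proof is routine.
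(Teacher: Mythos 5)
Your proof is correct and is exactly the loop-stripping/loop-adding bijection that the paper has in mind; the paper itself omits the argument entirely, remarking only that the proposition is ``straightforward to see.'' Your explicit attention to the symmetry of $\mathcal{N}_G$ (needed for the edges to be well-defined unordered pairs) is a reasonable point to make explicit, since the paper's definition leaves ``undirected'' implicit.
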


\begin{prop}\label{set-isomorphic}
The set of all perfect cycles on a graph is set-isomorphic to the set of all cycles on the correspondent simple graph.  
\end{prop}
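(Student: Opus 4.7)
The plan is to write down an explicit bijection $\Phi$ between perfect cycles on $G$ and cycles in the corresponding simple graph (identified via Proposition~\ref{class}).

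Given a perfect cycle $P = \sum_{i=1}^n (a_i \zeta_i c_i)$, Remark~\ref{remark2} supplies the chaining $c_i = a_{i+1}$ (indices modulo $n$) together with $a_1 = c_n$, so I can extract a cyclic vertex sequence $a_1, a_2, \ldots, a_n, a_1$. I would declare $\Phi(P)$ to be this closed walk. Well-definedness is immediate: each basis element $(a_i \zeta_i c_i)$ is a morphism $\overline{2} \to G$ with $a_i \neq c_i$, which forces $\{a_i, c_i\}$ to be an edge of $G$ and hence an edge of the associated simple graph.

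For the inverse $\Psi$, I would fix a canonical convention for the center position (for instance, always take $\zeta_i = a_i$) and map a cycle $v_1 v_2 \cdots v_n v_1$ in the simple graph to $\sum_{i=1}^n (v_i v_i v_{i+1})$. The composites $\Phi \circ \Psi = \mathrm{id}$ and $\Psi \circ \Phi = \mathrm{id}$ then reduce to checking that the chaining condition is preserved and that the center choices are consistent with the fixed convention, which is routine bookkeeping.

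The main obstacle is the genuine ambiguity in the choice of center $\zeta_i \in \{a_i, c_i\}$: the definition of a perfect cycle permits either $(aab)$ or $(abb)$ for the same underlying edge, as the example $(aab)+(bcc)+(ccd)+(ddb)+(bba)$ makes visible. To obtain a literal bijection, one must either pin down a canonical center choice in the very definition of perfect cycle, or enrich the notion of a simple-graph cycle so that each traversed edge carries one extra bit of data indicating which endpoint serves as the center. Either commitment makes the statement correct; the proof then just verifies that $\Phi$ and $\Psi$ respect that convention throughout.
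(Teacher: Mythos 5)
The paper supplies no argument for this proposition at all (it is filed under ``straightforward to see''), so your explicit $\Phi$/$\Psi$ construction is the only reasonable way to make the claim precise, and your diagnosis of the center ambiguity is correct: $(aab)+(bbc)+(cca)$ and $(abb)+(bcc)+(caa)$ are distinct elements of $\mathcal{S}_1(G)$, both perfect cycles, both sent by $\Phi$ to $abca$, so injectivity genuinely fails unless a center convention is imposed. That much of your proposal is sound and honest.

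However, your list of obstructions is incomplete, and the missing ones are of exactly the same nature, so the ``routine bookkeeping'' at the end hides further failures of bijectivity. First, orientation: $(aab)+(bbc)+(cca)$ and its reversal $(aac)+(ccb)+(bba)$ are distinct perfect cycles (even after your convention $\zeta_i=a_i$ is imposed) whose images are the same unoriented triangle; since a cycle of a simple graph is ordinarily an unoriented subgraph, you must also commit to \emph{oriented} cycles on the simple-graph side --- which is in fact forced by the homology, as Fact 3 makes reversal act by $-1$. Second, the definition of perfect cycle does not require the underlying closed walk to be a simple cycle: an edge-disjoint union such as $(aab)+(bba)+(ccd)+(ddc)$, or a figure-eight such as $(aab)+(bbc)+(cca)+(aad)+(dde)+(eea)$, satisfies every clause of the definitions of proper and perfect cycle (it lies in $\mathrm{Ker}\,\partial_1$, all coefficients are $1$, every basis element uses two distinct letters) yet corresponds to no single cycle of the simple graph. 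Third, the simple cycle $(aab)+(bba)$ maps to the closed walk $aba$, which is not a cycle under the usual convention that cycles have length at least three. None of this affects how the proposition is used later in the paper, where perfect cycles are always written as oriented vertex sequences $a_1a_2\ldots a_pa_1$ with the center suppressed by $\zeta$; but a literal set isomorphism requires all of these conventions to be fixed, not only the one you identified, and your proof as written does not verify injectivity or surjectivity once they are.
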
 

\indent It now makes sense to mention to edges of a graph in our category.

\begin{defn}
Let $x,y$ be vertices of a graph $G$. If $x\in\mathcal{N}_G(y)$, we say that $x$ is \textbf{adjacent} to $y$ and $\{x,y\}$ the \textbf{edge} between $x$ and $y$ on $G$. We denote $E_G$ to be the set of all edges on $G$. 
\end{defn}

\indent Note that, for our convenience, summations of cycles of any graphs are considered to be as summations in the first homology group of that graphs unless we mention to. By \ref{set-isomorphic}, it makes sense to represent perfect cycles by cycles in the category of simple graphs. For instance, $(aab)+(bcc)+(caa)$ can simply write as $abca$. Also, by \ref{class}, it is reasonable to imagine interchangeably between undirected reflexive graphs and simple graphs, consequently, a notion such as connectedness and vertex deletion in simple graph theory are valid in our category as well.       

\section{\large First Homology Group of Hamiltonian Graphs} 

\begin{defn}
Let $G$ be a Hamiltonian graph with $n$ vertices and $a_1a_2...a_na_1$ be a Hamiltonian cycle. We relabel the names of all vertices of $G$ by setting $a_i$ to become $i$. An isomorphic new graph is called a \textbf{circle form} of $G$.   
\end{defn}

Roughly speaking, a Hamiltonian graph $G$ can be depicted as a cycle graph made by a Hamiltonian cycle and equipped with diagonal lines. See an example below.

\begin{center}
	\includegraphics[scale=0.83]{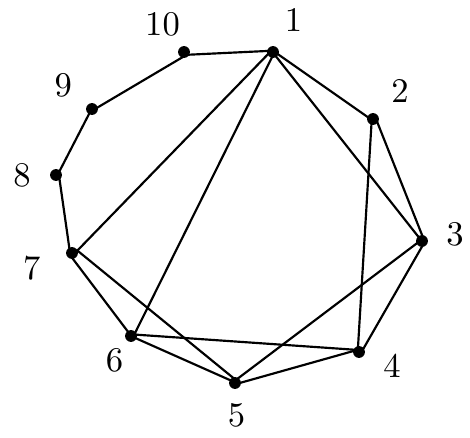}
\end{center}

It is not always true that a Hamiltonian graph has just only one Hamiltonian cycle. From on now, when we mention to a Hamiltonian graph $G$, it is understood that we are dealing with a fixed circle form of $G$ unless it has an exception.\\ 

Given $G$ a Hamiltonian graph with $n$ vertices. For each $v\in V_G$, we denote 
$$\overline{\mathcal{N}_G(v)}=
\begin{cases}
	\mathcal{N}_G(v)-\{v-1,v,v+1\} &\text{if}~~v=2,...,n-1;\\
	\mathcal{N}_G(v)-\{1,2,n\} &\text{if}~~v=1;\\
	\mathcal{N}_G(v)-\{1,n-1,n\} &\text{if}~~v=n.
\end{cases}	
$$

Note that if $G$ is a cycle graph, we apparently have $\overline{\mathcal{N}_G(v)}=\emptyset$ for all $v$. \\

For a Hamiltonian graph $G$ which is not a cycle, if $u\in \overline{\mathcal{N}_G(v)}$, we will say that $\mathbi{u~and~v~are}$ $\mathbi{diagonally~adjacent}$. Then, we create a set 
$$\mathcal{O}_G=\{\{r,s\}:r,s~ \text{are~diagonally~adjacent}\}$$ and 4 types of perfect cycles associating to each edge $\{r,s\}$ of $\mathcal{O}_G$: if $r<s$,
\begin{itemize}
\item Type 1. $rs(s+1)...n12...(r-1)r$;
\item Type 2. $rs(s-1)...(r+1)r$;
\item Type 3. $sr(r-1)...1n(n-1)...(s+1)s$; 
\item Type 4. $sr(r+1)...(s-1)s$.
\end{itemize}

For each edge $\{r,s\}$, we let $A_{\{r,s\}}$ to be a set which consists of homology classes of all 4 types of cycles corresponding to the edge $\{r,s\}$.

\begin{lem}\label{lemma}
The set $\mathfrak{C}=\{[12...(n-1)n1]\}\cup\Big{(}\bigcup_{\{r,s\}\in \mathcal{O}_G} A_{\{r,s\}}\Big{)}$ spans $H_1(G)$.
\end{lem}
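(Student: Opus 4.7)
The plan is to induct on the number of diagonal edge traversals in a representative perfect cycle. By Remark~\ref{remark2}, every homology class in $H_1(G)$ can be represented by a perfect cycle, and by Proposition~\ref{set-isomorphic}, such a cycle corresponds to a closed walk in the underlying simple graph. For such a walk $C$, let $d(C)$ denote the total number of edges traversed by $C$ that belong to $\mathcal{O}_G$, counted with multiplicity. I will show by induction on $d(C)$ that $[C]$ lies in the $\mathbb{Z}$-span of $\mathfrak{C}$.

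For the base case $d(C)=0$, the walk $C$ is supported entirely on the Hamiltonian subgraph, which is the cycle graph on $n$ vertices. The result recalled from \cite{Phanachet2015} that the first homology of a cycle graph is $\mathbb{Z}$, generated by the class of the full cycle, gives that $[C]$ is an integer multiple of $[12\ldots n1]$, which already belongs to $\mathfrak{C}$.

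For the inductive step, suppose $d(C)\geq 1$ and pick a diagonal edge $\{r,s\}$ with $r<s$ that is traversed by $C$, say in the direction $r\to s$. Let $T$ denote the associated Type~2 cycle $rs(s-1)\ldots(r+1)r$, which belongs to $\mathfrak{C}$. The main step is to show $[C]-[T]=[C']$, where $C'$ is obtained from $C$ by replacing that single diagonal step by the Hamiltonian detour $r\to r+1\to\cdots\to s$. I will argue this by replacing $-T$ with the termwise-reversed cycle $\mathrm{rev}(T)$, valid in $H_1$ by Fact~3. In the formal sum $C+\mathrm{rev}(T)$, the two terms $(rrs)$ (from $C$) and $(srr)$ (from $\mathrm{rev}(T)$) have combined class zero by Fact~2, so they can be discarded; what remains is exactly the $S_1$-representation of $C'$ after a cyclic reordering. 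Since $d(C')=d(C)-1$, induction gives $[C']\in\langle\mathfrak{C}\rangle_{\mathbb{Z}}$, hence $[C]=[T]+[C']$ does as well. The symmetric case in which $C$ traverses the diagonal $s\to r$ is handled by using Type~4 in place of Type~2.

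The main obstacle is making the splicing rigorous: one must verify that, after the cancellation of $(rrs)+(srr)$, the remaining formal sum reorders into a bona fide perfect cycle (with no consecutive vertex coincidences) and that the count $d$ strictly decreases. The fact that $r$ and $s$ are non-adjacent on the Hamiltonian cycle, built into the definition of $\mathcal{O}_G$, is what guarantees that the Hamiltonian detour inside $T$ is a nontrivial path consisting solely of Hamiltonian edges, so the replacement indeed removes a diagonal use without introducing new ones. A subsidiary check is that if $C$ uses $\{r,s\}$ more than once, the procedure may be applied to one occurrence at a time, so the induction terminates after $d(C)$ steps.
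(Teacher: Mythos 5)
Your proof is correct and takes essentially the same approach as the paper: both arguments remove diagonal edges one at a time by splicing in an auxiliary Hamiltonian path and its reverse, expressing the class as (an element of $\mathfrak{C}$) plus (a class with strictly fewer diagonal traversals), and iterating. Your explicit induction on $d(C)$, with the cycle-graph result handling the base case, is a tidier packaging of the paper's case-by-case splitting via the inserted path $*$ and $*'$.
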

\begin{proof}
	Let $[A]$ be a nontrivial element of $H_1(G)$ where $A=a_1a_2...a_pa_1$ and $a_2,...,a_p$ being all distinct by the sake of \ref{remark2}. \\
\indent If there is no element of $\mathcal{O}_G$ on $A$, it is obvious to see that $A$ is $12...(n-1)n1$ and the proof is completed. \\
\indent If there is an element of $\{a_ja_{j+1}, a_pa_1: 1\leq j < p\}\subset \mathcal{O}_G$ on $A$, we immediately know that $A$ is one of $4$ types and then finish the proof. \\
\indent If there are only two elements of $\{a_ja_{j+1}, a_ka_{k+1},a_pa_1 : 1\leq j,k < p, j\not=k\}\subset \mathcal{O}_G$ on $A$, we have 
\begin{align*}
[a_1a_2...a_ja_{j+1}...a_ka_{k+1}...a_pa_1]&=[a_ja_{j+1}...a_ka_{k+1}...a_pa_1...a_j]\\
&=[a_ja_{j+1}...a_{k-1}*a_j]+[a_j*'a_{k-1}a_ka_{k+1}...a_pa_1...a_j],
\end{align*}
where $*$ is the sequence of distinct vertices which 1.) are also different from $a_{j+1},...,a_{k-1}$ and 2.) makes $a_ja_{j+1}...a_{k-1}*a_j$ have no element of $\mathcal{O}_G$ except $a_ja_{j+1}$. Conversely, $*'$ is just the backward sequence of $*$. It is clear that $[a_ja_{j+1}...a_{k-1}*a_j]\in \mathfrak{C}$ but 
$[a_j*'a_{k-1}a_ka_{k+1}...a_pa_1...a_j]$ might not be straightforward like the former. However, the latter can be simplified into a form of $\mathfrak{C}$ by reducing the representative to a perfect cycle. 
\\
\indent For the rest of the cases, we apply the consideration of the previous case to them. Therefore, the only thing we have to do is repeating the step until there is no homology class which its representative contains more than one element of $\mathcal{O}_G$. 
\end{proof}

\begin{rem}\label{remark3}
\begin{enumerate}
	\item From the proof, we are able to see that every possible combination of a perfect cycle has just only $1$ as the coefficients.
	\item For each element of $\mathcal{O}_G$, it is not difficult to see that: \newline
	 	 (i) $~~$[cycle in Type 2] = $-$[Hamiltonian cycle] + [cycle in Type 1]\newline
	 	 (ii)  $~$[cycle in Type 3] = $-$[cycle in Type 1]\newline
	 	 (iii) [cycle in Type 4] = $-$[cycle in Type 2] = [Hamiltonian cycle] $-$ [cycle in Type 1]	   
\end{enumerate} 
\end{rem}
 
By \ref{remark3}(2), we immediately know that $\mathfrak{C}$ is not linearly independent. Hence, to attain a basis of $H_1(G)$ from $\mathfrak{C}$, we have to remove some its elements off. \\

In the next proposition, we generalize the idea of triviality shown in Lemma 3.6 of \cite{Phanachet2015} to apply for any graphs. For our convenience, we abbreviate the word ``perfect cycle of length 3" as only just ``3-perfect cycle". 

\begin{thm}\label{trivial}
Let $A$ be an element of Ker$\partial_1$ of a graph $G$. There is a combination of $3$-perfect cycles $B$ such that 
$[A]=[B]$ if and only if $A\in$ Im$\partial_2(G)$.
\end{thm}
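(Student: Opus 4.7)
The plan is to reduce the biconditional to the single claim that every $3$-perfect cycle lies in $\mathrm{Im}\,\partial_2(G)$. This suffices because a $3$-perfect cycle, by the definition of perfect cycle together with the length-$3$ restriction, is exactly the triangle cycle $(aab)+(bcc)+(caa)$ attached to three pairwise adjacent distinct vertices $a,b,c$ of $G$, and a ``combination of $3$-perfect cycles'' just means an integer linear combination of such triangle cycles.

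The backward direction ``$A\in\mathrm{Im}\,\partial_2(G)\Rightarrow$ some combination $B$ exists'' is immediate: take the empty combination $B=0$ and observe $[A]=0=[B]$. For the forward direction, suppose $[A]=[B]$ with $B=\sum_i n_i T_i$ a combination of $3$-perfect cycles $T_i$. Then $A-B\in\mathrm{Im}\,\partial_2(G)$ by definition of homology-class equality, so it is enough to show $B\in\mathrm{Im}\,\partial_2(G)$, and by linearity this reduces to trivialising an arbitrary single $3$-perfect cycle $T=(aab)+(bcc)+(caa)$.

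To trivialise $T$ I would write down an explicit filling $2$-simplex. A convenient choice is the matrix $\left(\begin{smallmatrix} c&b&b \\ c&b&a \\ c&c&a \end{smallmatrix}\right)$, which is a legitimate morphism $I_2\to G$ precisely because $a,b,c$ are pairwise adjacent, so every lattice-adjacent pair of entries is an edge of $G$. Computing its four faces and discarding the degenerate one yields $\tilde{\partial}_2 = (aab)+(cca)-(cbb)$. I would then invoke Fact~2 in two guises: the identity $(xxy)\equiv(xyy)$ converts $(cca)$ into $(caa)$, while the reversal relation $(xyz)+(zyx)\equiv 0$ combined with $(cbb)\equiv(ccb)$ converts $-(cbb)$ into $(bcc)$. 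Together these rewrites turn the computed boundary into $(aab)+(caa)+(bcc)$, exhibiting $T$ as a boundary.

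The only genuinely non-mechanical step is the choice of filling $2$-simplex at the start of the previous paragraph: one must produce a matrix whose four faces, after dropping degenerates and applying Facts~1 and~2, reduce exactly to the three terms of the target $3$-perfect cycle. Once such a matrix is in hand, the remainder of the argument is a line-by-line substitution using relations already tabulated in Section~2.
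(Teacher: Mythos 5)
Your proof is correct for the theorem as literally stated, and your treatment of the substantive implication (existence of $B$ with $[A]=[B]$ implies $A\in\mathrm{Im}\,\partial_2$) is essentially the paper's: exhibit one $2$-simplex whose boundary equals the given $3$-perfect cycle up to degenerates and the two-letter/reversal relations of Facts 1--2. Your matrix $\left(\begin{smallmatrix} c&b&b \\ c&b&a \\ c&c&a \end{smallmatrix}\right)$ checks out: every pair of entries in neighbouring positions of $I_2$ (including diagonal neighbours, forced by the categorical product) lies in $\{a,b\}$, $\{b,c\}$ or $\{c,a\}$, and its nondegenerate boundary is $(aab)+(cca)-(cbb)$, homologous to $(aab)+(bcc)+(caa)$; the paper uses $\left(\begin{smallmatrix} a&c&c \\ a&b&b \\ a&a&b \end{smallmatrix}\right)$ to the same effect. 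The genuine divergence is in the converse. You dispatch it with $B=0$, which is logically admissible for the statement as written ($A\in\mathrm{Im}\,\partial_2$ forces $[A]=[0]$, and in a triangle-free graph the empty combination is in fact the only available one), but it proves that direction vacuously. The paper instead does real work there: for a generator $\alpha=\left(\begin{smallmatrix} a&b&c \\ d&e&f \\ g&h&i \end{smallmatrix}\right)$ it rewrites $[\partial_2\alpha]$ as the eight-edge perimeter cycle $ghifcbadg$ and cones it to the centre vertex $e$, obtaining $[\partial_2\alpha]=[gheg+hieh+\cdots+dged]$, an explicit summation of $3$-perfect cycles each with coefficient $1$. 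That constructive form is exactly what is invoked later (e.g.\ in Theorem~3.5, where a trivial perfect cycle is decomposed as a genuine summation of $3$-perfect cycles from which the triangular subgraphs of $G'$ are assembled). So nothing in your argument is wrong, but it establishes a strictly weaker fact than the paper's proof does, and the weaker version would not support those downstream uses.
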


\begin{proof}
For a $3$-perfect cycle of $G$, say $(a\ze b)+(b\ze c)+(c\ze a)$, there always have a $2$-simplex 
$\left(\begin{smallmatrix} a&c&c \\ a&b&b \\ a&a&b \end{smallmatrix}\right)$ which its image under $\partial_2$ is that cycle. Thus, $[B]$ is trivial and then $A$ is automatically in Im$\partial_2(G)$.\\
\indent Conversely, if $A\in$ Im$\partial_2(G)$, there is a combination of basis elements of $S_2(G)$ which its image is $A$. To complete the proof, it suffices to show that each of these basis elements has the image as a combination of simple cycles or $3$-perfect cycles. Let $\alpha=\left(\begin{smallmatrix} a&b&c \\ d&e&f \\ g&h&i \end{smallmatrix}\right)$
be an element of $\mathcal{S}_2(G)$ with assuming that $a,b,...,i$ are all distinct. We have 
$$\partial_2(\alpha)=(ghi)+(ifc)-(abc)-(gda).$$
However, 
\begin{align*}
[(ghi)+(ifc)-(abc)-(gda)]&= [(ghi)+(ifc)+(cba)+(adg)]\\
&= [(g\ze h)+(h\ze i)+(i\ze f)+(f\ze c)+(c\ze b)\\
&~~~+(b\ze a)+(a\ze d)+(d\ze g)]\\
&=[((g\ze h)+(h\ze e)+(e\ze g))+((h\ze i)+(i\ze e)+(e\ze h))\\
&~~~+...+((d\ze g)+(g\ze e)+(e\ze d))]\\
&=[gheg+hieh+ifei+fcef+cbec\\
&~~~+baeb+adea+dged].
\end{align*}
Hence, there is a combination of $3$-perfect cycles $B=gheg+hieh+ifei+fcef+cbec+baeb+adea+dged$ such that $[A]=[B]$.
\end{proof}

\begin{defn}
Let $A=a_1a_2...a_ma_1$ be a perfect cycle on a graph $G$ with $m>3$. We call $A$ a \textbf{completely perfect cycle}, if there is no pair $i,j\in\{1,...,m\}$ such that $a_i$ is adjacent to $a_j$ on $G$ whenever $i+1<j$. 
\end{defn}

\begin{rem}
\indent 1. We have seen from the method in proving Lemma 3.2 and Theorem 3.4 that any two perfect cycles that have an intersection on a path of a graph can produce the new cycle getting from combining these two cycles. We will call this kind of cycle summation as \textbf{intersection-free summation}. Note that there are only two possible new orientations of cycles arisen from the intersection-free summation of perfect cycles. For our convenience in writing, we will not mention to which orientation we are working with since the readers can notice from the context. \\ 
\indent 2. For any perfect cycle $a_1a_2...a_ma_1$ of a graph $G$ with $m>3$, if there are $i,j\in\{1,...,m\}$ such that $i+1<j$ and $a_i$ is adjacent to $a_j$ on $G$, we can split the cycle into the intersection-free summation of two perfect cycles, say $a_1...a_ia_ja_{j+1}...a_ma_1$ and $a_ia_{i+1}...a_ja_i$. The edge $a_ia_j$ is called a \textbf{splitting edge} of $a_1a_2...a_ma_1$ on $G$. With this fact, a perfect cycle $A$ on a graph can be written as a summation of completely perfect cycles and 3-perfect cycles on $G$. We call these cycles as \textbf{cycle components} of $A$.\\
\indent 3. We will use the word \textbf{triangular graphs} instead when dealing with cycle graphs of length 3. Also, the symbol $\D abc$ denotes the triangular subgraph having the vertices $a,b$ and $c$. 
\end{rem}

\begin{thm}\label{hardest}
Let $A$ be a trivial perfect cycle on a graph $G$. There always is a connected subgraph $H$ of $G$ which contains $A$ and every cycle on $H$ is trivial. 
\end{thm}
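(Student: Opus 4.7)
The plan is to construct $H$ as the $1$-skeleton of a ``triangulated filling'' of $A$. Since $A\in\mathrm{Im}\,\partial_2(G)$, there is a $2$-chain $\gamma=\sum_i c_i\alpha_i\in\mathcal{S}_2(G)$ with $\tilde\partial_2\gamma=A$; the argument in the proof of Theorem~\ref{trivial} further shows that each $\partial_2\alpha_i$ is homologous in $H_1(G)$ to a sum of eight $3$-perfect cycles meeting at the center vertex $e_i$ of $\alpha_i$. Collecting these gives a family of triangles $T_1,\dots,T_N\subseteq G$ whose signed sum is homologous to $A$.

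First I would set $H$ to be the subgraph of $G$ whose vertex set is the union of the vertices of $A$ and of every $\alpha_i$, and whose edge set is the union of the edges of $A$ and those of each $T_j$. The containment $A\subseteq H$ is then immediate. Connectedness of $H$ follows by a short induction on the number of $2$-simplices involved: each $\alpha_i$ is realized in $H$ as a fan of eight triangles meeting at $e_i$, and successive $\alpha_i$'s must overlap on a vertex or an edge, since otherwise their boundary contributions cannot combine to produce the connected cycle $A$.

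The substantive step is showing that every perfect cycle $C$ of $H$ is trivial. My plan is induction on the length of $C$. In the base case $C$ has length $3$, so $C$ is a $3$-perfect cycle of $H$ and is automatically trivial by the identity used at the start of the proof of Theorem~\ref{trivial}. For the inductive step, I would locate a splitting edge of $C$ inside $H$, provided either by the center $e_i$ of some fan or by an edge shared between two fans, and split $C=C_1+C_2$ into strictly shorter cycles of $H$, each trivial by the inductive hypothesis; triviality of $C$ then follows because the splitting-edge contributions cancel in $H_1$ by Fact~2.

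The main obstacle is guaranteeing a splitting edge in $H$ for every cycle $C$ of length $\geq 4$, because the chosen $H$ may contain a chord-free cycle. The fallback is to iteratively enlarge $H$ by adjoining $2$-simplices of $G$ (and the triangles they induce) that fill any newly exposed cycle; since $G$ is finite the process terminates, and one must check that each augmentation preserves triviality of cycles already covered without destroying $A\subseteq H$. This closure step is where I expect the proof to absorb most of the combinatorial bookkeeping.
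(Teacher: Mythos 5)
Your proposal correctly locates the crux, but the step you defer to the end --- guaranteeing that every cycle of $H$ of length at least $4$ can be split into shorter ones --- is exactly where the argument breaks, and your fallback runs in the wrong direction. If you take $H$ to be the union of $A$ with \emph{all} triangles arising from an arbitrary filling $\gamma=\sum_i c_i\alpha_i$, you have no control over which new cycles appear: two fillings of $A$ differ by a $2$-cycle $\tau$ (with $\partial_2\tau=0$), and the triangles contributed by such a $\tau$ can close up into loops of $H$ that are not in $\mathrm{Im}\,\partial_2(G)$ at all. For such a loop your augmentation step has nothing to adjoin --- no $2$-simplices of $G$ fill it --- so the iteration cannot terminate in success; and even when an exposed cycle is fillable, adjoining its filling triangles may expose further cycles, with no invariant tied to $A$ that forces the process to close up. The fix has to be \emph{selection}, not augmentation. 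The paper goes the opposite way: from the union $G'$ of all filling triangles it carves out a nested family of subgraphs (the ``layer'' subgraphs $H_1,H_2,\dots$ with induced cycles $A_1,A_2,\dots$), choosing one chain of triangles per edge of the current cycle, precisely so that an arbitrary perfect cycle of the final $H$ is forced to decompose along ``level edges'' into pieces trapped between consecutive layers, each of which is a sum of the chosen $3$-perfect cycles. That structural idea --- prune the filling into layers so that splitting edges are available by construction --- is what your sketch is missing. (For what it is worth, the paper's own write-up of this construction is itself fragmentary, with several sentences breaking off, so you have correctly identified the genuinely hard point; but flagging it as ``combinatorial bookkeeping'' understates that the stated closure procedure can simply fail.)

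Two smaller points. Connectedness of the union of the $\alpha_i$'s does not follow from ``their boundary contributions must combine to produce $A$'': a summand $\tau$ with $\partial_2\tau=0$ contributes nothing to the boundary and may be disjoint from everything else; you must pass to the connected component containing $A$. And you need the eight $3$-perfect cycles attached to each $\alpha_i$ to sum to $\partial_2\alpha_i$ \emph{on the nose} as $1$-chains (modulo degenerates), not merely to be homologous to it, since otherwise the claim that the triangles of $H$ actually carry $A$ requires tracking the correction terms.
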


\begin{proof}
Let assume that $A=a_1a_2...a_ma_1$. It is trivial for the cases where $m\leq3$. So, suppose that $m>3$. Since $A$ is trivial, by theorem 3.4, there are $3$-perfect cycles on $G$ which their summation is $A$. So, the union of all triangular subgraphs corresponding to these $3$-perfect cycles is a subgraph of $G$ containing $A$, say $G'$. Also, for any edge $xy$ of any triangular subgraph of $G'$, if $xy\notin\{a_1a_2,a_2a_3,...,a_ma_1\}$, there is another triangular subgraph of $G'$ having $xy$ as its edge. \\
\indent We first assume that $A$ is completely perfect. We construct a subgraph $H$ of $G'$ as follows: \\
\indent 1. Since $G'$ contains $A$, we pick a triangular subgraph of $G'$ which has the $a_1a_2$ edge, say $\D a_1a_2\al^{1,2}_0$ and another one which has the $a_2a_3$ edge, say $\D a_2a_3\al^{2,3}_0$. Note that $\D a_1a_2\al^{1,2}_0\not=\D a_2a_3\al^{2,3}_0$.  \\
\indent 2. Since $A$ is completely perfect, we have $a_2\al^{1,2}_0,a_2\al^{2,3}_0\notin\{a_1a_2,a_2a_3,...,a_ma_1\}$. \\
\underline{Case1} $\al^{1,2}_0=\al^{2,3}_0$. So, $a_2\al^{1,2}_0=a_2\al^{2,3}_0$. We construct the subgraph $H^{1,2,3}$ of $G'$ by union $\D a_1a_2\al^{1,2}_0$ and $\D a_2a_3\al^{2,3}_0$. Thus, the intersection-free summation of two 3-perfect cycles corresponding to these triangular subgraphs is $a_1a_2a_3\al^{1,2}_0a_1$.\\
\underline{Case2} $\al^{1,2}_0\not=\al^{2,3}_0$. So, $a_2\al^{1,2}_0\not=a_2\al^{2,3}_0$. By the fact above, there are $\D a_2\al^{1,2}_0\al^{1,2}_1$ and $\D a_2\al^{2,3}_0\al^{2,3}_1$ of $G'$.  \\
\indent \underline{2.1} $\D a_2\al^{1,2}_0\al^{1,2}_1=\D a_2\al^{2,3}_0\al^{2,3}_1$. The subgraph $H^{1,2,3}$ is the union of  $\D a_1a_2\al^{1,2}_0$, $\D a_2a_3\al^{2,3}_0$ and $\D a_2\al^{1,2}_0\al^{1,2}_1$. The intersection-free summation of three 3-perfect cycles corresponding to these triangular subgraphs is $a_1a_2a_3\al^{2,3}_0\al^{1,2}_0a_1$.\\
\indent \underline{2.2} $\D a_2\al^{1,2}_0\al^{1,2}_1\not=\D a_2\al^{2,3}_0\al^{2,3}_1$. We apply Step 1.--2.1 to the vertices $\al^{1,2}_1,\al^{2,3}_1$ of $\D a_2\al^{1,2}_0\al^{1,2}_1$, $\D a_2\al^{2,3}_0\al^{2,3}_1$, respectively. By the finiteness of the graph $G'$, the construction must finally end and, by relabeling the names of vertices, the perfect cycle corresponding the intersection-free summation of 3-perfect cycles on $H^{1,2,3}$ is $a_1a_2a_3\al^{2,3}\al^{1,2}_{k_1}...\al^{1,2}_1\al^{1,2}_0a_1$ for some $k_1\in\mathbb{N}$.  \\
\indent 3. Apply Step 1. and 2. to the pair $a_2a_3,a_3a_4$ but fix the triangular subgraph $\D a_2a_3\al^{2,3}_0$. We then have the perfect cycle corresponding the intersection-free summation of 3-perfect cycles on $H^{2,3,4}$ is $a_2a_3a_4\al^{3,4}_0\al^{2,3}_{k_2}...\al^{2,3}_1\al^{2,3}_0a_2$ for some $k_2\in\mathbb{N}$. \\
\indent 4. Continue applying Step 1., 2. and 3. to the rest of edge pairs until the last pair $a_ma_1,a_1a_2$, we skip Step 1. and fix both $\D a_ma_1\al^{m,1}_0,\D a_1a_2\al^{1,2}_0$ which attain from the previous construction and the perfect cycle corresponding the intersection-free summation of 3-perfect cycles on $H^{m,1,2}$ is $a_ma_1a_2\al^{1,2}_0\al^{m,1}_{k_m}...\al^{m,1}_1\al^{m,1}_0a_m$ for some $k_m\in\mathbb{N}$. We define $H_1:=\bigcup^m_{i=1}H^{i,i+1,i+2}$ and call it the $\mathbi{layer-1~subgraph~of~G'}$.
\\
\indent 5. We have the intersection-free summation of all 3-perfect cycles corresponding to all triangular subgraphs of $H_1$ is $$a_1a_2...a_ma_1+\al^{1,2}_0\al^{m,1}_{k_m}...\al^{m,1}_1\al^{m,1}_0\al^{m-1,m}_{k_{m-1}}...\al^{m-1,m}_1\al^{m-1,m}_0...\al^{2,3}_0\al^{1,2}_{k_1}...\al^{1,2}_1\al^{1,2}_0.$$ Let say the latter cycle the $\mathbi{induced~cycle~of~A~on~H_1}$ or, shortly, $A_1$. Rename the vertices of $A_1$ respectively by letting $\al^{1,2}_0:=b_1,\al^{m,1}_{k_m}:=b_2,...,\al^{1,2}_2:=b_{n-1},\al^{1,2}_1:=b_n$. An edge $xy$ on $H_1$ is called a 
$\mathbi{path~edge}$ on $H_1$ if $xy$ is the edge which $A$ or $A_1$ goes through. Otherwise, $xy$ is called a $\mathbi{level~edge}$ of $A$ and $A_1$ on $H_1$.\\
\indent 6. Since $A_1$ is perfect, by Remark 3.6(2), $A_1$ is a summation of cycle component. Let $\mathcal{C}(A_1)$ be the set of all splitting edges of $A_1$ on $G'$. We will split the graph $H_1\cup\mathcal{C}(A_1)$ into a union of its subgraphs respecting to $\mathcal{C}(A_1)$ as follows: pick $b_sb_t\in\mathcal{C}(A_1)$  where $s<t$. So, there are level edges $a_jb_t,a_kb_s$ of $H_1$ such that are paths connecting $A$ and $A_1$. Then, we split $H_1\cup\mathcal{C}(A_1)$ as a union of its subgraphs which are a subgraph induced by the vertices $a_j,a_{j+1},...,a_k,b_s,b_{s+1},...,b_t$ and a subgraph induced by $a_j,a_{j-1},...,a_1,a_m,...,a_{k+1},a_k,b_s,b_{s-1}$, $...,b_1,b_n,...,b_{t+1},b_t$. Roughly speaking, $H_1$ is chopped into two pieces along the path $a_kb_sb_ta_j$. Continue this process to both subgraphs with other elements of $\mathcal{C}(A_1)$ and so on. Since $\mathcal{C}(A_1)$ is finite, we finally have a collection of subgraphs of $H_1\cup\mathcal{C}(A_1)$ and we call each subgraph a $\mathbi{layer-1~subgraph~of~G'~respecting~to~A_1}$, denoted by $L_1$. Note that each layer-1 subgraph respecting to $A_1$ contains only one of cycle components of $A_1$.  \\
\indent 7. For each $L_1$ containing a completely perfect cycle, we apply Step 1.-5. to that cycle and then have the layer-2 subgraph $H_2$ and the induced perfect cycle $A_2$. By construction, for each vertex $x$ of $A_2$, there always is a path $xyz$ connecting $x$ of $A_2$ and $z$ of $A$ which $xy$ and $yz$ are level edges of $H_2$ and $H_1$, respectively. Then, we split $L_1\cup H_2$ into a union of its subgraphs respecting to $\mathcal{C}(A_2)$ in the same manner of Step 6 with using paths described previously for splitting. As Step 6., we call each subgraph, obtained from the splitting, a layer-2 subgraph of $G'$ respecting to $A_2$, denoted by $L_2$. For each $L_1$ containing a 3-perfect cycle, we stop the construction process.       \\
\indent 8. For each $L_2$, we continue applying Step 1.-5. to $A_2$ and then splits $L_2\cup H_3$ as Step 6.-7. to get $L_3$'s. Thus, we continue the construction process by repeating this manner to $L_3$'s with their induced cycles $A_3$ and so forth. However, by the finiteness of $G'$ and the fact that, for any edge $xy$ such that $xy\notin\{a_1a_2,a_2a_3,...,a_ma_1\}$, there is a triangular subgraph of $G'$ having $xy$ as its edge, the construction process must end in finite repetition,   we For each cycle component subgraph respecting to $H_1$ and $A_1$ which the cycle component is  completely perfect, apply Step 1.--4. to the cycle component and call a new subgraph of $G'$ and a new cycle on it as $H_2$ and $A_2$, respectively. By applying Step 5.--7. to $H_2$, we have a collection of cycle component subgraphs respecting to $H_2$ and $A_2$. So, let $H$ be the union of: 1.) all cycle component subgraphs respecting to $H_2$ and $A_2$ of all cycle component subgraphs respecting to $H_1$ and $A_1$ which their cycle component are completely perfect and 2.) all cycle component subgraphs respecting to $H_1$ and $A_1$ which their cycle component are 3-perfect.  \\
\indent 9. Continue applying Step 1.--8. to all $A_2$'s and then get a new $H$. When applying Step 1.--8. to get new prefect cycles and a new $H$, we say that it is a $\mathbi{round}$ of construction process. However, by the finiteness of $G'$ and the fact that, for any edge $xy$ such that $xy\notin\{a_1a_2,a_2a_3,...,a_ma_1\}$, there is a triangular subgraph of $G'$ having $xy$ as its edge, the construction process must end, that is, new prefect cycle $A_N$'s at the $N$th round for some $N\in\mathbb{N}$ are all trivial. Therefore, the graph $H$ of the $N$th round is our desired subgraph of $G'$. \\
\indent 10. Apply Step 1.--5. to $A_1$ and attain $H_2$ and $A_2$. So, let the graph $H$ is the union of $H_1$ and $H_2$. If $A_2$ is completely perfect, repeat the process by applying Step 1.--5. to $A_2$ and so forth until there is a cycle $A_k$ for some $k\in\mathbb{N}$ which is not completely perfect. Then, apply Step 6.--
\indent Note that, by the construction, for each trivial $A_i$, where $1\leq i\leq N$, there is only one cycle component subgraph respecting to $H_i$ and $A_i$, say $L_i$. If $i>1$, there is only one cycle component subgraph respecting to $H_{i-1}$ and $A_{i-1}$ which produces $L_i$, say $L_{i-1}$. With this reason, we can inductively collect subgraphs of $H$ which are $L_1,...,L_i$. The subgraph $L:=\bigcup_{\xi=1}^i L_\xi$ is called a $\mathbi{layered~subgraph}$ of $H$. By the construction, the intersection of each consecutive pair $L_i,L_{i+1}$ is a collection of paths which means that the intersection is nonempty. Also, each $L_i$ is connected. Hence, any layered subgraph $L$ of $H$ is always connected.   \\
\indent For the general case of $A$, by Remark 3.6(2), $A$ can be written as a summation of cycle components. So, we apply the previous construction to each cycle component of $A$ and then have a collection of subgraphs of $G'$. Thus, the union of all those subgraphs is the desired subgraph $H$ of $G'$ which automatically is a subgraph of $G$. Equivalently, by the note above, $H$ is the union of all its layered subgraphs. Note that each splitting edge of $A$ is also called a path edge.\\
\indent By the construction of $H$, the intersection-free summation of all 3-perfect cycles corresponding to all triangular subgraphs on $H$ is $A$. Thus, $H$ is connected since if it is not, the intersection-free summation gives $A$ as a summation of two distinct perfect cycles. \\
\indent Let $C=c_1...c_pc_1$ be a perfect cycle on $H$. If $c_1...c_q$ is the longest path of $C$ on a layered subgraph $L$, where $1<q<p$, by connectedness of $L$, there is a path from $c_1$ to $c_q$ on $L$, say $c_1d_1d_2...d_uc_q$. So, $C$ can be separated into a summation of a cycle on $L$ and a cycle on $H$ outside $L$, that is, $C=c_1...c_qd_u...d_1c_1+c_1d_1...d_uc_qc_{q+1}...c_pc_1$. If $p=q$, $C$ must be a cycle on a layered subgraph and we call $C$ is a $\mathbi{layered~cycle}$. With this fact, any perfect cycle on $H$ can be written as a summation of distinct layered cycles.   \\
\indent Therefore, to show that $C$ is trivial, it suffices to show for the case of $C$ being a layered cycle on a layered subgraph $L$. Let $L=\bigcup_{\xi=1}^n L_\xi$ for some $n\in\mathbb{N}$. For the convenience in explaining, we let $A_0:=A$ and $m$ is the smallest number which $C$ travels through $A_\xi$ for $\xi\in\{0,1,...,n\}$. By recycling the order of vertices of $C$, let $c_1...c_v$ be a path of $C$ on $A_m$ which $c_pc_1$ and $c_vc_{v+1}$ are level edges of $A_m$ and $A_{m+1}$. Since $A_{m+1}$ is a cycle, there is a path $c_pe_1...e_wc_{v+1}$ of $A_{m+1}$ such that $C$ can be written as $c_1...c_vc_{v+1}e_we_{w-1}...e_1c_pc_1+c_pe_1...e_wc_{v+1}c_{v+2}...c_p$. By       \\

\indent It is clear that $H$ contains $A$ and meets the first property. To show that $H$ is connected, assume in the contrary that $H$ is not. So, $H$ must have more than one component. This means that $A$ is not able to be just only one perfect cycle. 
\end{proof}

From now on, if we mention to the $1$-simplex corresponding to an edge $\{r,s\}$ and not regarding to an order of $r$ and $s$, it is denoted by $(r\ze s)'$. 

\begin{defn}
Let $\{p,q\}$ and $\{r,s\}$ be edges of $\mathcal{O}_G$. Whenever there is a trivial perfect cycle containing 
$(p\ze q)'$ and $(r\ze s)'$, we say $\{p,q\}$ and $\{r,s\}$ are \textbf{edge-connected}. 
\end{defn}

\begin{defn}[Nets]
Let $\mathcal{G}$ be a subset of $\mathcal{O}_G$. If $\mathcal{G}$ holds a property that any pair of its edges is always edge-connected then we say that $\mathcal{G}$ is an \textbf{edge-connected set}. Furthermore, if $\mathcal{G}$ is also a maximal subset, $\mathcal{G}$ is called a \textbf{net} of $G$.  
\end{defn}

\begin{rem}
1. the edge-connectedness does not satisfy the transitive property. A graph below is used to show our claim.

\begin{center}
	\includegraphics[scale=0.83]{Fig1}
\end{center}

By the graph above, it can be seen that $13576421$ and $16571$ are trivial perfect cycles since $$[13576421]=[1321]+[3543]+[5765]+[4564]+[2342]~~~\text{and}$$
$$[16571]=[1671]+[5765].$$

Therefore, $\{1,3\},\{5,7\}$ and $\{5,7\},\{1,6\}$ are edge-connected. Conversely, $\{1,3\}$ and $\{1,6\}$ are not edge-connected since it is impossible to have a trivial perfect cycle on this graph traveling through $\{1,3\}$ and $\{1,6\}$. This means that $\{1,3\}$ is not in the same net containing $\{1,6\}$ and then there must have at least two nets on the graph.

2. From the first remark, we imply that the edge-connectedness cannot be an equivalence relation. That is, any two nets might intersect vacuously or not. Practically, it can be checked that this graph has just only two nets and they intersect to each other. That is,
\begin{align*}
\mathcal{G}_1&=\{\{1,3\},\{2,4\},\{3,5\},\{4,6\},\{5,7\}\} \\
\mathcal{G}_2&=\{\{1,6\},\{1,7\},\{2,4\},\{3,5\},\{4,6\},\{5,7\}\}.
\end{align*}
\end{rem}

Let $\mathcal{G}$ be an edge-connected set on a Hamiltonian graph $G$ and consider a unique subgraph $s(\mathcal{G})$ of $G$ defined by:
\begin{enumerate}
\item $x\in V_{s(\mathcal{G})}$ iff there are edges $\{p,q\},\{r,s\}\in\mathcal{G}$ such that $x$ is on a trivial perfect cycle traveling through $(p\ze q)',(r\ze s)'$.
\item $y\in \mathcal{N}_{s(\mathcal{G})}(x)$ iff $y\in \mathcal{N}_{G}(x)$ and there are edges $\{p,q\},\{r,s\}\in\mathcal{G}$ such that either $(x\ze y)'$ is on a trivial perfect cycle traveling through $\{p,q\}',\{r,s\}'$.
\end{enumerate}
We weigh all edges of $s(\mathcal{G})$ by defining a weight $w:E_{s(\mathcal{G})}\rightarrow \mathbb{N}\cup\{0\}$ as follows: 
$$w(\{r,s\}) = \begin{cases}
				0 &\mbox{if}~\{r,s\}\notin\mathcal{O}_G;\\
				1 &\mbox{if}~\{r,s\}\in\mathcal{O}_G.
				\end{cases}$$
Applying the $\mathbi{minimum~spanning~tree~algorithm}$ to the weighted graph of $s(\mathcal{G})$ and then getting a minimum spanning subtree $T$ of $s(\mathcal{G})$. We call the set $\mathcal{T}_{T,\mathcal{G}}:=E_T\cap\mathcal{G}$ a \textbf{spanning set} of the edge-connected set $\mathcal{G}$ based on $T$.

\begin{rem}\label{remark net}
1. Since every edge-connected set $\mathcal{G}$ is contained in a net $\mathcal{G}'$, we have that $s(\mathcal{G})$ must be a subgraph of $s(\mathcal{G}')$.   \\
\indent 2. The algorithm does not guarantee the uniqueness of a minimum spanning subtree $T$ of $s(\mathcal{G})$, so is 
$\mathcal{T}_{T,\mathcal{G}}$. \\
\indent 3. All edges of $s(\mathcal{G})$ outside $\mathcal{O}_G$ are always selected by the minimum spanning tree algorithm. Thus, we know that the difference between any two minimum spanning subtrees of $\mathcal{G}$ is the chosen edges from $\mathcal{O}_G$.  \\
\indent 4. We know that the number of edges of a spanning subtree of a graph is the number of the graph vertices minus $1$. By 2., we immediately know that the order of a spanning set $\mathcal{T}_{T,\mathcal{G}}$ is unique for every $T$ and $|\mathcal{T}_{T,\mathcal{G}}|$ can be computed by the formula:
$$|\mathcal{T}_{T,\mathcal{G}}|=|V_{s(\mathcal{G})}|-|E_{s(\mathcal{G})}\cap(E_G-\mathcal{O}_G)|-1.$$ 
Clearly, $\mathcal{T}_{T,\mathcal{G}}$ might be vacuous and then $|\mathcal{T}_{T,\mathcal{G}}|=0$.
\end{rem}

From now on, if we want to mention to a spanning set of $\mathcal{G}$ not regarding to its minimum spanning subtree $T$, we will denote $\mathcal{T}_{T,\mathcal{G}}$ as $\mathcal{T}_\mathcal{G}$, or $\mathcal{T}$ if it is unambiguous by the context.







\begin{thm}\label{key theorem}
Let $A$ be a perfect cycle on a Hamiltonian graph $G$. Then $A$ is a cycle on $s(\mathcal{G})$ of a net $\mathcal{G}$ if and only if $A\in$ Im$\partial_2$ of $G$.
\end{thm}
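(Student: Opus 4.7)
The plan is to handle the two directions separately, exploiting Theorem \ref{hardest} for the forward implication and the explicit construction of $s(\mathcal{G})$ for the reverse.

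For the direction $A\in\mathrm{Im}\,\partial_2\Rightarrow A$ is a cycle on some $s(\mathcal{G})$: I would apply Theorem \ref{hardest} to obtain a connected subgraph $H\subset G$ that contains $A$ and on which every perfect cycle is trivial. Set $\mathcal{G}_0:=E_H\cap\mathcal{O}_G$. Given any two edges of $\mathcal{G}_0$, connectedness of $H$ lets us join their endpoints by two internally disjoint paths in $H$, which concatenate to a perfect cycle on $H$ passing through both diagonals; since every cycle on $H$ is trivial, the two diagonals are edge-connected. Thus $\mathcal{G}_0$ is an edge-connected subset of $\mathcal{O}_G$ and hence contained in some net $\mathcal{G}$. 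The definition of $s(\mathcal{G})$ then places every vertex and edge of $H$ inside $s(\mathcal{G})$ (each lies on a trivial cycle through a pair of edges of $\mathcal{G}_0\subset\mathcal{G}$), so in particular $A$ is a cycle on $s(\mathcal{G})$.

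For the converse, let $A$ be a perfect cycle on $s(\mathcal{G})$. By construction, each edge $e$ of $A$ lies on a trivial perfect cycle $T_e\subset s(\mathcal{G})$ passing through two edges of $\mathcal{G}$. I would fix a spanning set $\mathcal{T}$ of $\mathcal{G}$ and normalize the $T_e$ so that their non-$A$ portions are pinned to the subgraph carried by $\mathcal{T}$. Then induct on the number of edges of $A$ not in $\mathcal{G}$: replacing one such edge $e$ by the complementary arc of $T_e$ yields a perfect cycle $A'$ on $s(\mathcal{G})$ with $[A]=[A']$ in $H_1(G)$ and strictly fewer non-$\mathcal{G}$ edges. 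At the base case, $A$ is supported entirely on $\mathcal{G}$; edge-connectedness of the net (see Remark \ref{remark net}) then lets us cancel the remaining diagonals pairwise via trivial cycles, concluding $[A]=0$, i.e.\ $A\in\mathrm{Im}\,\partial_2$.

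The main obstacle is the induction in the converse direction: after replacing an edge $e$ of $A$ by the complementary arc of $T_e$, the resulting walk may self-intersect and therefore fail to be a perfect cycle, or the invariant ``number of non-$\mathcal{G}$ edges'' may fail to strictly decrease. Overcoming this requires a coherent choice of the $T_e$ — anchored to the spanning tree of $s(\mathcal{G})$ obtained from $\mathcal{T}$ — together with splitting arguments in the spirit of Remark 3.6 whenever a swap introduces new intersections. Verifying this bookkeeping, and the final collapse of diagonal-only cycles into boundaries through pairwise edge-connectedness, is the technical crux of the proof; the forward direction is essentially a one-step application of Theorem \ref{hardest}.
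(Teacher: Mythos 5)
Your direction ``$A\in\mathrm{Im}\,\partial_2\Rightarrow A$ lies on some $s(\mathcal{G})$'' is essentially the paper's argument: invoke Theorem \ref{hardest} to get the connected subgraph $H$ on which every cycle is trivial, observe that $E_H\cap\mathcal{O}_G$ is an edge-connected set and hence contained in a net $\mathcal{G}$, and conclude $H\subset s(\mathcal{G})$. One caveat: edge-connectedness of two diagonals requires a single trivial perfect cycle passing through both, and mere connectedness of $H$ does not give ``two internally disjoint paths'' between their endpoints --- that is a $2$-connectedness statement, and with a cut vertex between the two diagonals no such cycle exists. You need the extra structure of $H$ coming from the construction in \ref{hardest}, not just that it is connected; the paper is equally terse here, so this is a shared but real omission.

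The genuine gap is in the other direction, where you depart from the paper. The paper proves ``$A$ on $s(\mathcal{G})$ implies $[A]=0$'' by explicitly assembling $A$ as an intersection-free summation of $3$-perfect cycles drawn from triangular subgraphs of $s(\mathcal{G})$, then quoting Theorem \ref{trivial}; your edge-swapping induction does not close. First, the invariant ``number of non-$\mathcal{G}$ edges of $A$'' need not decrease: the complementary arc of $T_e$ generally contains edges of $s(\mathcal{G})$ lying neither in $\mathcal{G}$ nor in the tree, and the definition of $s(\mathcal{G})$ gives you no right to ``normalize'' $T_e$ so that its non-$A$ portion sits on the spanning tree $T$ --- the trivial cycles witnessing that an edge belongs to $s(\mathcal{G})$ are whatever they happen to be. Indeed, arranging that for every non-tree edge $e$ of $s(\mathcal{G})$ the fundamental cycle of $e$ with respect to $T$ is trivial is essentially equivalent to the statement being proved (fundamental cycles generate the cycle space), so the normalization step is circular. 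Second, the proposed base case is not one: a perfect cycle cannot be ``supported entirely on $\mathcal{G}$'' and then be killed by ``cancelling diagonals pairwise,'' since each such cancellation reintroduces non-diagonal edges and the process as described does not terminate in $[A]=0$. You correctly flag this bookkeeping as the crux, but the proposal does not supply the idea that resolves it; the paper's resolution is the triangulation/peeling construction inside $s(\mathcal{G})$, which your argument would need to reproduce or replace.
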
  

\begin{proof}
Let $A=a_1a_2...a_na_1$ on $s(\mathcal{G})$ of a net $\mathcal{G}$. If $n=3$, it is apparently true. So, assume that $n>3$. We consider the subgraph of $s(\mathcal{G})$ obtaining by deleting a vertex $x$ on $s(\mathcal{G})$ which holds the following properties: $1.)$ $x$ is not on $A$ and $2.)$ there is an adjacent edge of $x$ being not a co-edge. Repeat the same process to $s(\mathcal{G})-x$ if it is necessary until the final subgraph has each vertex appears on $A$ or, if not, its adjacent edges are all co-edges. We name this subgraph $K$. Next, choose a triangular subgraph, say $\D a_1a_2\alpha$, having $\{a_1,a_2\}$ as its edge. Since $n>3$, it needs more than a triangular subgraph for $A$ to travel on. Thus, there is an edge of $\D a_1a_2\alpha$ which is a co-edge. WLOG, assume that $\al\not= a_3$. So, $\{a_2,\al\}$ and $\{a_1,\al\}$ are co-edges. By the property of $K$ and finiteness of vertices, there is a sequence of vertices $\al,\la_1,...,\la_k$ on $\mathcal{G}''$ which associates to triangular subgraphs $\D a_2\al\la_1,\D a_2\la_1\la_2,...,\D a_2\la_{k-1}\la_k,\D a_2\la_ka_3$ having $\{a_2,\la_1\},\{a_2,\la_2\},...,\{a_2,\la_k\}$ as all co-edges. With the same process, we continue collecting the triangular subgraphs and stop at the case of $\{a_{n-1},a_n\}$ and suppose that the triangular subgraph containing $\{a_{n-1},a_n\}$ is $\D a_{n-1}\gamma a_n$. For the case of $\{a_n,a_1\}$, it is possible that there is no sequences of vertices, beginning with $\gamma$ and ending with $\al$, having the property as previous ones. However, there must have a sequence $\gamma,...,\beta$ where $\beta$ is a vertex of $\D a_n\beta a_1$ and $\beta\not=\al$. Therefore, we finally obtain a collection of triangular subgraphs such that when combining them we get a subgraph of $K$, say $K_1$. We do a summation of $3$-prefect cycles corresponding to all triangular subgraphs we get previously by letting the direction of summation following the direction of $A$. What we get is the cycle 
$$a_1a_2...a_na_1a_2\beta...\gamma...\la_k\la_{k-1}...\la_1\al a_1,$$ which can be split into two perfect cycles, that are $a_1a_2...a_na_1$ and $a_1a_2\beta...\gamma...\la_k\la_{k-1}...$ \\ $\la_1\al a_1$. It is evident to see that we can apply the same method above to the subgraph $K_2$ of $K$ obtaining from deleting $a_3,...,a_n$ out of $K$ and the cycle $a_1a_2\beta...\gamma...\la_k\la_{k-1}...\la_1$ \\ $\al a_1$, since $K_2$ also have the same properties as $K$. So, we again get a new subgraph of $K$ and a new related perfect cycle. By the finiteness of vertices, the new cycle getting must be shorter than the former. Consequently, the process of selecting triangular subgraphs from $s(\mathcal{G})$ is finished by choosing the triangular subgraph corresponding to the last $3$-perfect cycle and then clearly see that the summation of all $3$-perfect cycles getting from above process in the appropriate direction is $A$. This concludes that $A\in$ Im$\partial_2(G)$ by \ref{trivial}.\\
\indent Conversely, if $A\in$ Im$\partial_2(G)$, by \ref{hardest}, there is a connected subgraph $K$ of $G$ contains $A$ and every cycle on $K$ is trivial. So, $E_K\cap\mathcal{O}_G$ is an edge-connected set, therefore, $K$ is clearly a subgraph of $s(E_K\cap\mathcal{O}_G)$. Hence, by \ref{remark net}(1), $K$ must be a subgraph of $s(\mathcal{G}')$, for some net $\mathcal{G}'$. 
\end{proof}

\begin{rem} Given a net $\mathcal{G}$ of a graph $G$ with a fixed circle form $\mathfrak{F}$. It is clear that some edges of $s(\mathcal{G})$ might be parts of $\mathfrak{F}$. By applying minimal spanning tree method, we are able to obtain a spanning subtree of $s(\mathcal{G})$ which those parts are all collected. This can be easily done by assuming all edges of $s(\mathcal{G})$ on $\mathfrak{F}$ are weighted by $0$ and the others are $1$. 
\end{rem}

\begin{defn}
Given $\mathcal{T}$ be a spanning set of a net $\mathcal{G}$. The subgraph $\mathcal{T}(\mathfrak{B})$ obtained by deleting all parts of $\mathcal{T}$ on $\mathfrak{F}$ is called \textbf{basis subgraph} corresponding to the net $\mathcal{G}$.   
\end{defn}

\begin{rem}\label{remark1}
1. We denote $\mathcal{T}(\mathfrak{B})^i$ to be the subset of $\bigcup A_{\{r,s\}}$ with all nontrivial classes in Type $i$ cycles on $G$.\\
\indent 2. For our convenience, whenever we are not concerned with the type (in 4 types) of a cycle corresponding to $\{r,s\}\in \mathcal{O}_G$, the symbol $[(r\ze s)]$ can be used to represent the homology class of all 4 types of cycles. Furthermore, the symbol $[(r\ze s)]^i$ means the homology class of Type $i$ cycle.\\
\indent 3. By \ref{key theorem}, it is possible that the homology class of Hamiltonian cycle or any element of $\mathcal{T}(\mathfrak{B})^i$ is trivial.
\end{rem}

\begin{thm}\label{key theorem 2}
Let $G$ be a Hamiltonian graph with a fixed circle form. If $G$ has just only one net $\mathcal{G}$ with its spanning set $\mathcal{T}$, the homology class of Hamiltonian cycle and $\mathcal{T}(\mathfrak{B})^i$ form a basis of $H_1(G)$, for any $i$.  
\end{thm}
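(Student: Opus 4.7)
The plan is to establish the spanning property and linear independence separately, each leveraging the spanning tree structure of $\mathcal{T}$ together with Theorem \ref{key theorem}.

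For spanning, I would begin from Lemma \ref{lemma}, which gives that $\mathfrak{C}=\{[12\ldots(n-1)n1]\}\cup\bigcup_{\{r,s\}\in\mathcal{O}_G} A_{\{r,s\}}$ spans $H_1(G)$. Remark \ref{remark3}(2) rewrites each of the four classes in $A_{\{r,s\}}$ as a $\mathbb{Z}$-linear combination of $[(r\ze s)]^i$ and the Hamiltonian cycle class, so the reduced set $\{[12\ldots(n-1)n1]\}\cup\{[(r\ze s)]^i:\{r,s\}\in\mathcal{O}_G\}$ still spans. To further trim down to $\mathcal{T}$, for each $\{r,s\}\in\mathcal{O}_G\setminus\mathcal{T}$ adding $\{r,s\}$ to $T$ produces a unique fundamental graph-cycle $P_{r,s}$ lying entirely in $s(\mathcal{G})$; by Theorem \ref{key theorem} we have $[P_{r,s}]=0$. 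Splitting $P_{r,s}$ along its diagonal edges and re-expressing each resulting diagonal-plus-Hamiltonian-arc piece as a Type $i$ cycle (modulo a multiple of the Hamiltonian class via Remark \ref{remark3}(2)) yields a $\mathbb{Z}$-relation that solves for $[(r\ze s)]^i$ in terms of the Hamiltonian class and $\{[(r'\ze s')]^i:\{r',s'\}\in\mathcal{T}\}$. Discarding classes already known to be trivial (per Remark \ref{remark1}(3) and the definition of $\mathcal{T}(\mathfrak{B})^i$) leaves exactly $\{[12\ldots(n-1)n1]\}\cup\mathcal{T}(\mathfrak{B})^i$.

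For linear independence, I plan a rank count. The generating set $\{[12\ldots(n-1)n1]\}\cup\{[(r\ze s)]^i:\{r,s\}\in\mathcal{O}_G\}$ has $1+|\mathcal{O}_G|$ elements, and by Theorem \ref{key theorem} every relation among them should lift to a trivial perfect cycle on $s(\mathcal{G})$. The integer cycle space of $s(\mathcal{G})$, regarded as an undirected graph, has rank $|E_{s(\mathcal{G})}|-|V_{s(\mathcal{G})}|+1$, which together with Remark \ref{remark net}(4) and the hypothesis $\mathcal{G}=\mathcal{O}_G$ (the unique net of $G$) collapses to $|\mathcal{O}_G|-|\mathcal{T}|$. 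The fundamental cycles $\{P_{r,s}:\{r,s\}\in\mathcal{O}_G\setminus\mathcal{T}\}$ form a basis of this cycle space and therefore contribute exactly $|\mathcal{O}_G|-|\mathcal{T}|$ independent relations among our generators. The quotient then has rank $(1+|\mathcal{O}_G|)-(|\mathcal{O}_G|-|\mathcal{T}|)=1+|\mathcal{T}|$, which after removing trivial classes matches the cardinality of $\{[12\ldots(n-1)n1]\}\cup\mathcal{T}(\mathfrak{B})^i$, forcing linear independence.

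The main obstacle will be promoting Theorem \ref{key theorem}, which characterizes trivial \emph{single} perfect cycles, to the claim that every chain-level relation among our generators is a $\mathbb{Z}$-combination of the fundamental cycle relations $[P_{r,s}]=0$. A hypothetical vanishing relation $A\in\mathrm{Im}\,\tilde{\partial}_2$ is an integer chain with possibly negative coefficients, not a single perfect cycle. To bridge this, I would use Fact 3 of Section 2 to reverse cycles and absorb negative coefficients, then apply Remarks \ref{remark2}(1)--(2) to rewrite the resulting positive chain as a sum of perfect cycles, and finally invoke Theorem \ref{key theorem} on each summand together with the cycle-space decomposition of $s(\mathcal{G})$ to identify the total as a $\mathbb{Z}$-combination of the $P_{r,s}$. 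Careful tracking of orientations and Type $i$ representatives through this reduction is where the bulk of the bookkeeping sits.
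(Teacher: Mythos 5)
Your spanning argument is essentially the paper's: both take the fundamental cycle $C$ obtained by adjoining a non-tree diagonal $\{r,s\}$ to the spanning tree $T$, note that $C$ lives on $s(\mathcal{G})$ and is therefore trivial by Theorem \ref{key theorem}, decompose $[C]$ as $[(r\ze s)]+\sum[(a_j\ze a_{j+1})]$ via Lemma \ref{lemma} and Remark \ref{remark3}(1), and normalize to Type $i$ at the cost of a multiple of the Hamiltonian class via Remark \ref{remark3}(2). For linear independence you diverge: you count ranks, identifying the relation module with the integer cycle space of $s(\mathcal{G})$ of rank $|E_{s(\mathcal{G})}|-|V_{s(\mathcal{G})}|+1=|\mathcal{O}_G|-|\mathcal{T}|$ and deducing that the quotient has rank $1+|\mathcal{T}|$, whereas the paper argues minimality directly: a hypothetical relation $(*)$ among the Hamiltonian class and the $\mathcal{T}(\mathfrak{B})^i$ classes would, after regrouping into perfect cycles and applying Theorem \ref{key theorem}, force a nontrivial graph-cycle supported on the tree $T$, which is absurd. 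The paper's route buys a shorter argument that sidesteps any need to identify the \emph{entire} relation module; your route buys an explicit rank formula for $H_1(G)$ but requires the extra claim that the fundamental-cycle relations exhaust all relations. Both versions hinge on the same bridge, which you correctly single out as the main obstacle and which the paper itself asserts rather than proves: a vanishing integer combination of the generators must be regrouped into perfect cycles each of which individually lies in $\mathrm{Im}\,\tilde{\partial}_2$ (triviality of a sum of perfect cycles does not by itself give triviality of each summand), so that Theorem \ref{key theorem} can be applied termwise to place them on $s(\mathcal{G})$. So your proposal is a legitimate, slightly more quantitative variant, carrying the same unfilled gap as the published proof.
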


By \ref{remark1}(4), the theorem still makes sense even though $\mathcal{T}(\mathfrak{B})^i$ is empty. If it happens, we can conclude that $H_1(G)$ has just one generator, that is, the class of Hamiltonian cycle. If the homology class of Hamiltonian cycle is trivial, we immediately have $H_1(G)$ is trivial.   

\begin{proof}
Let $\Psi$ be the collection of all homology classes mentioned in the statement and $A^i$ be the set of homology classes of all Type$i$ cycles on $G$. WLOG, it suffices to show for $i=1$.\\
\indent To show that $\Psi$ spans $H_1(G)$, it suffices to exhibit that for each 
$[(r\ze s)]\in A^1-\mathcal{T}(\mathfrak{B})^1$ can be written in a combination of elements of $\Psi$. Since $\mathcal{T}$ is a spanning tree on $G$, there is a perfect cycle $C$ on $s(\mathcal{G})$ which is formed by some edges of $\mathcal{T}$ and $\{r,s\}$. Assume that $\mathcal{T}(\mathfrak{B})$ consists of edges $\{a_1,a_2\},\{a_3,a_4\},...,\{a_{k-1},a_k\}$. By \ref{lemma}, \ref{remark3}(1) and \ref{remark1}(2), $$[C]=[(r\ze s)]+[(a_1\ze a_2)]+...+[(a_{k-1}\ze a_k)].$$ However, $C$ is a perfect cycle on $s(\mathcal{G})$, by \ref{key theorem}, $[C]=[0]$. Therefore, we have $[(r\ze s)]+[(a_1\ze a_2)]+...+[(a_{k-1}\ze a_k)]=[C]=[0]$. That is,
$$-[(r\ze s)]=[(a_1\ze a_2)]+...+[(a_{k-1}\ze a_k)].$$
By \ref{remark3}(2), we can write the previous equation in a new form as
$$-[(r\ze s)]=m[\text{Hamiltonian~cycle}]+[(a_1\ze a_2)]^1+...+[(a_{k-1}\ze a_k)]^1,$$
for an integer $m$. \newline
\indent We claim that $\Psi$ is minimal spanning set of $H_1(G)$ in order to show that it is linearly independent. Assume in the contrary that there is a spanning proper subset $\psi$ of $\Psi$. It is obvious that if the homology class of the Hamiltonian cycle is not in $\psi$, $H_1(G)$ is not able to be spanned by $\psi$. So, let it be included in $\psi$. Pick $[(u\ze v)]\in \Psi-\psi$. Then, there are some elements $[(b_1\ze b_2)],...,[(b_l\ze b_{l+1})]$ of $\psi$ such that 
\begin{equation}
\sum_{i=1}^lm_i[(b_i\ze b_{i+1})]+[(u\ze v)]=[0] \tag{$*$}  
\end{equation}
for $m_1,...,m_l\in \mathbb{Z}$. Since the combination of the homology classes in $(*)$ can always be grouped into a combination of the homology classes of perfect cycles, by \ref{key theorem}, these perfect cycles must be on $s(\mathcal{G})$. However, each $[(b_i\ze b_{i+1})]$ corresponds to an edge of the basis subgraph $\mathcal{T}(\mathfrak{B})$. Thus, from $(*)$, we finally have the perfect cycles on the tree $T$ corresponding to $\mathcal{T}$, and that contradicts to the property of trees. 
\end{proof}

\begin{thm}\label{key theorem 3}
Let $G$ be a Hamiltonian graph with a fixed circle form and $\mathcal{G}_1,...,\mathcal{G}_k$ be its all distinct nets with $\mathcal{T}_1,...,\mathcal{T}_k$ their corresponding spanning sets. The first homology class of the Hamiltonian cycle and $\bigcup_{j=1}^k \mathcal{T}_j(\mathfrak{B})^i$ form a basis of $H_1(G)$, for any $i$.
\end{thm}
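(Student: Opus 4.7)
The plan is to extend the argument of Theorem~\ref{key theorem 2} from one net to several, using Theorem~\ref{key theorem} to isolate the contribution of each net $\mathcal{G}_j$ to any trivial-cycle relation. As in the single-net case, the proof naturally splits into a spanning step and a linear-independence step.

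For the spanning step, I would begin from Lemma~\ref{lemma}: $H_1(G)$ is already spanned by $[12\ldots(n-1)n1]$ together with $\{[(r\ze s)]^i:\{r,s\}\in\mathcal{O}_G\}$, so it suffices to rewrite each such $[(r\ze s)]^i$ in terms of the proposed set. Every $\{r,s\}\in\mathcal{O}_G$ lies in at least one net, because the singleton $\{\{r,s\}\}$ is vacuously edge-connected and hence is contained in some maximal edge-connected set $\mathcal{G}_j$. Fixing such a $\mathcal{G}_j$, I would replay the spanning argument of Theorem~\ref{key theorem 2} inside $\mathcal{G}_j$: there is a perfect cycle on $s(\mathcal{G}_j)$ formed by $\{r,s\}$ together with edges of the spanning tree $T_j$, which is trivial by Theorem~\ref{key theorem}, and this yields the desired integer expression of $[(r\ze s)]^i$ as a combination of $[H]$ and elements of $\mathcal{T}_j(\mathfrak{B})^i\subseteq\bigcup_{j'}\mathcal{T}_{j'}(\mathfrak{B})^i$.

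For linear independence, the plan is to mimic the proper-subset contradiction of Theorem~\ref{key theorem 2}. Writing $\Psi:=\{[H]\}\cup\bigcup_j\mathcal{T}_j(\mathfrak{B})^i$, I would assume a proper spanning $\psi\subsetneq\Psi$; then $[H]\in\psi$ is forced (else the Hamiltonian class is unreachable), so some $[(u\ze v)]^i\in\Psi-\psi$ satisfies
\[
m[H]+\sum_{\{a,b\}\in E}m_{\{a,b\}}[(a\ze b)]^i+[(u\ze v)]^i=[0]
\]
for a finite $E\subseteq\bigcup_{j'}\mathcal{T}_{j'}(\mathfrak{B})$. Using Remark~\ref{remark2} I would rewrite the left-hand side as a sum of perfect cycles at the chain level, and then invoke Theorem~\ref{key theorem} to assign each such summand to a net $\mathcal{G}_{j'}$ in whose $s(\mathcal{G}_{j'})$ it actually lies. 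Fixing the net $\mathcal{G}_{j_0}$ with $\{u,v\}\in\mathcal{T}_{j_0}(\mathfrak{B})$, the summand located on $s(\mathcal{G}_{j_0})$ must contain $\{u,v\}$, and by Remark~\ref{remark net}(3) its remaining $\mathcal{O}_G$-edges sit in $\mathcal{T}_{j_0}(\mathfrak{B})$; the result is a perfect cycle inside the spanning tree $T_{j_0}$, which is the same impossibility used to close the single-net proof. The Hamiltonian coefficient $m$ is then forced to vanish by the Type-to-Hamiltonian relations in Remark~\ref{remark3}(2).

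The main obstacle, and the only substantive new issue beyond Theorem~\ref{key theorem 2}, is that distinct nets may share edges, so naively ``projecting'' the vanishing relation onto a single net is not well-defined at the level of homology classes. I plan to sidestep this by performing the net-by-net sorting at the chain level: after unfolding the relation into a combination of perfect cycles in $B_1(G)$, Theorem~\ref{key theorem} attaches a witnessing net to each summand, and only then do I restrict attention to the net $\mathcal{G}_{j_0}$ that carries the distinguished edge $\{u,v\}$. Once this bookkeeping is in place, the closing argument is exactly the tree-based contradiction from Theorem~\ref{key theorem 2}, so no genuinely new machinery is required beyond a careful use of Theorem~\ref{key theorem} and Remark~\ref{remark net}(3).
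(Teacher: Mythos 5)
Your overall route coincides with the paper's: spanning is obtained by running the single-net argument of Theorem~\ref{key theorem 2} inside whichever net contains a given edge of $\mathcal{O}_G$ (your observation that every such edge lies in at least one net is a detail the paper leaves implicit), and independence is obtained by unfolding a vanishing relation into perfect cycles at the chain level, using Theorem~\ref{key theorem} to locate each such cycle on the subgraph $s(\mathcal{G}_j)$ of some net, and then deriving the impossibility of a cycle inside the spanning tree of that net. Your explicit warning that nets may share edges, so the ``projection'' onto a net must be done at the chain level rather than on homology classes, is exactly the point the paper compresses into the sentence that each perfect cycle ``has to travel just only on one of the graph of a net.''

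There is, however, one genuine defect in your independence step. You import the \emph{proper-spanning-subset} framing from Theorem~\ref{key theorem 2}: you rule out relations in which the distinguished class $[(u\ze v)]^i$ appears with coefficient $1$, and conclude minimality of the generating set. Over $\mathbb{Z}$ this does not give linear independence: $\{2,3\}$ generates $\mathbb{Z}$, no proper subset does, and neither element is an integer multiple of the other, yet $3\cdot 2-2\cdot 3=0$. Since the theorem asserts a \emph{basis} (and the paper's final corollary needs torsion-freeness, hence genuine $\mathbb{Z}$-independence), you must rule out an arbitrary integer relation
\[
h[\text{Hamiltonian cycle}]+\sum_{j}\sum_{k}m_{jk}[(r_{jk}\ze s_{jk})]^i=[0],
\]
not merely those with a unit coefficient somewhere. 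This is precisely what the paper's own proof of this theorem does (in contrast to its proof of Theorem~\ref{key theorem 2}). The repair costs you nothing: your chain-level sorting of the resulting perfect cycles into nets, followed by the tree contradiction via Remark~\ref{remark net}(3), applies verbatim to a general vanishing combination and forces $h$ and every $m_{jk}$ to be zero. So restate the independence step in that form and your argument matches the paper's.
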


\begin{proof}
Let $\Omega$ be the set of all homology classes mentioned in the statement. It is so easy by applying the proof of \ref{key theorem 2} for showing that $\Omega$ spans $H_1(G)$. So, it only remains to prove the linear independence of $\Omega$. Again, WLOG, it suffices to show for $i=1$.
\newline
\indent Assume that $|\mathcal{T}_j(\mathfrak{B})^1|=n_j$ for each $j$. Let 
\begin{equation}
h[\text{Hamiltonian~cycle}]+\sum_{j=1}^k \sum_{k=1}^{n_j} m_{jk}[(r_{jk}\ze s_{jk})]^1=[0], \tag{$*$}
\end{equation}
where, for each $k=1,...,n_j$, $[(r_{jk}\ze s_{jk})]^1\in \mathcal{T}_j(\mathfrak{B})^1$ and $h,m_{jk}\in \mathbb{Z}$.
The left side of $(*)$ can always be rearranged and grouped into a combination of the homology classes of the perfect cycles on $G$. Since it is also trivial, this can interpret that each homology class must be trivial. By \ref{key theorem}, we know that these perfect cycles lie on the graph corresponding to the nets. Additionally, each one has to travel just only on one of the graph of a net. However, each 
$(r_{jk}\ze s_{jk})$ is picked from $\mathcal{T}_j(\mathfrak{B})$. Hence, this concludes that each of these perfect cycles is the perfect cycle on the corresponding subtree of the net where it lives. This is impossible so the only possible solution for the equation $(*)$ to exist is that $h$ and each $m_{jk}$ must be zero.
\end{proof}

The following is the conclusion of the whole paper, which is obviously true by \ref{key theorem 3}
\begin{cor}
The first homology group of any Hamiltonian graphs is torsion-free.
\end{cor}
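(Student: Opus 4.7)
The plan is to derive the corollary as an immediate consequence of Theorem \ref{key theorem 3}. That theorem already exhibits an explicit finite set of homology classes, namely the class of the Hamiltonian cycle together with $\bigcup_{j=1}^k \mathcal{T}_j(\mathfrak{B})^i$ for any fixed $i\in\{1,2,3,4\}$, which form a \emph{basis} of $H_1(G)$. Since a $\mathbb{Z}$-module admitting a basis is by definition free, I would conclude that $H_1(G)$ is a free abelian group, and since every free abelian group is torsion-free, the corollary follows.

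Concretely, I would first fix a Hamiltonian graph $G$ and a circle form $\mathfrak{F}$. If $G$ has no diagonal edges (i.e. $\mathcal{O}_G=\emptyset$), then there are no nets at all and Theorem \ref{key theorem 3} degenerates to the statement that $H_1(G)$ is generated by the single class of the Hamiltonian cycle. In this case $H_1(G)$ is either $0$ or $\mathbb{Z}$, both of which are torsion-free. If $\mathcal{O}_G\neq\emptyset$, I would invoke Theorem \ref{key theorem 3} to produce the basis $\{[12\cdots n1]\}\cup\bigcup_{j=1}^k\mathcal{T}_j(\mathfrak{B})^1$ (say), so that
\[
H_1(G)\;\cong\;\mathbb{Z}^{\,1+\sum_{j=1}^k |\mathcal{T}_j(\mathfrak{B})^1|}.
\]

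Finally, I would observe that a finitely generated free abelian group $\mathbb{Z}^N$ contains no nonzero element $x$ with $mx=0$ for some positive integer $m$, since the ambient group is a subgroup of $\mathbb{Q}^N$ and $\mathbb{Q}^N$ is torsion-free. Hence $H_1(G)$ has trivial torsion subgroup, which is exactly the assertion of the corollary. There is no real obstacle here: all the difficulty was absorbed into the proofs of Lemma \ref{lemma}, Theorem \ref{trivial}, Theorem \ref{hardest}, Theorem \ref{key theorem}, Theorem \ref{key theorem 2} and Theorem \ref{key theorem 3}, which collectively establish both spanning and linear independence of the proposed generating set; once a basis is in hand, torsion-freeness is automatic.
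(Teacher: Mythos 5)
Your proposal is correct and follows exactly the route the paper intends: the corollary is stated as an immediate consequence of Theorem \ref{key theorem 3}, since a $\mathbb{Z}$-module with a basis is free and hence torsion-free. Your explicit treatment of the degenerate case $\mathcal{O}_G=\emptyset$ is a small but welcome addition that the paper leaves implicit.
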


\bibliographystyle{unsrt}

\begin{thebibliography}{800}

\bibitem{BenayatKadri1997} D. Benayat, M. L. Kadri, {Homotopy theory of hazy spaces I+II}, {{\it Maghr Math. Rev.}}, {\bf 6}(2) 103-136 (1997).

\bibitem{BenayatTalbi2014} D. Benayat, M. E. Talbi, {Homology Theory of Graphs}, {{\it Mediterranean Journal of Mathematics}}, {\bf 11} (2014), 813-828. 

\bibitem{Phanachet2015} N. Phanachet, {Calculation of the First Homology Group of $S^1_n$}, {{\it Proceedings of the 20th Annual Meeting in Mathematics}, Silpakorn University, Thailand}, (2015)

\end{thebibliography}

\end{document}